\documentclass[12pt,leqno]{amsart}
\topmargin=0.02cm
\textwidth =  17cm
\textheight = 23cm
\baselineskip=11pt
\setlength{\oddsidemargin}{0.01 pt}
\setlength{\evensidemargin}{0.01 pt}

\usepackage{amsmath,amsfonts,amssymb,amsthm,amscd,mathrsfs}
\usepackage{graphicx}
\graphicspath{ }
\usepackage{wrapfig}
\usepackage{subcaption}
\usepackage{calligra}

\numberwithin{figure}{section}

\theoremstyle{plain}
\newtheorem{theorem}{Theorem}[section]
\newtheorem{lemma}[theorem]{Lemma}

\newtheorem{cor}{Corollary}[theorem]
\theoremstyle{definition}

\theoremstyle{remark}

\usepackage{mathtools}
\title[$(m,\rho)$-quasi Einstein manifolds]{Diameter estimation of $(m,\rho)$-quasi Einstein manifolds}
\author[A. A. Shaikh, P. Mandal, C. K. Mondal]{Absos Ali Shaikh$^{1*}$, Prosenjit Mandal$^2$, Chandan Kumar Mondal$^{3}$}

\address{\noindent\newline $^{1,2}$Department of Mathematics,\newline The University of Burdwan,Golapbag,\newline Purba Bardhaman-713101,\newline West Bengal, India}

\address{\noindent\newline $^3$School of Sciences,\newline Netaji Subhas Open University,\newline Durgapur Regional Center, Durgapur-713214\newline Paschim Bardhaman,\newline West Bengal, India}

\email{$^1$aask2003@yahoo.co.in, aashaikh@math.buruniv.ac.in}
\email{$^2$prosenjitmandal235@gmail.com}
\email{$^3$chan.alge@gmail.com, chandanmondal@wbnsou.ac.in}

\begin{document}
\begin{abstract}
This paper aims to study  the $(m,\rho)$-quasi Einstein manifold. This article shows that a complete and connected Riemannian manifold under certain conditions becomes compact. Also, we have determined an upper bound of the diameter for such a manifold. It is also exhibited that the potential function acquiesces to the Hodge-de Rham potential up to a real constant in an $(m,\rho)$-quasi Einstein manifold. Later, some triviality and integral conditions are established for a non-compact complete $(m,\rho)$-quasi Einstein manifold having finite volume. Finally, it is proved that with some certain constraints, a complete Riemannian manifold admits finite fundamental group. Furthermore, some conditions for compactness criteria have also been deduced.
\end{abstract}
\noindent\footnotetext{$^*$ Corresponding author.\\ $\mathbf{2020}$\hspace{5pt}Mathematics\; Subject\; Classification: 53C20; 53C21; 53C25; 53E20.\\ 
{Key words and phrases: Riemannian manifold; $(m,\rho)$-Quasi Einstein Manifold; scalar curvature; index form; diameter estimation.} }
\maketitle
\section{Introduction and preliminaries}
  A non-gradient generalized $m$-quasi-Einstein manifold (\cite{BG16}) is an $n(> 2)$-dimensional Riemannian manifold $(N,g)$ such that it preserves a smooth function $\alpha \in C^{\infty}(N)$, with

\begin{equation}\label{mr2}
\alpha g+\frac{1}{m}W^\flat\otimes W^\flat=\frac{1}{2}\mathcal{L}_Wg+Ric,
\end{equation}
where $W\in \chi{(N)}$, it's dual 1-form $W^\flat$, $\mathcal{L}_Wg$ indicates the Lie derivative of $g$ along $W$ and $m$ is a scalar with $0<m\leq\infty$. In $(\ref{mr2})$, if we replace $\alpha$ by $\lambda+\rho R,$ for some real constants $\lambda$, $\rho$ and the scalar curvature $R$ of $N$, then $(N,g)$ is called a non-gradient $(m,\rho)$-quasi Einstein manifold (\cite{RP21}), and in this case equation $(\ref{mr2})$ reduces to
\begin{equation}\label{q1}
\lambda g+\rho Rg+\frac{1}{m}W^\flat\otimes W^\flat=\frac{1}{2}\mathcal{L}_Wg+Ric.
\end{equation}
If $W$ is the gradient of a real valued smooth function $f$ on $N$, then the $(m,\rho)$-quasi Einstein manifold is said to be a (gradient) $(m,\rho)$-quasi Einstein manifold (for details see \cite{DG17,HW13} ), with potential function $f$. In this case, ($\ref{q1}$) reduces to
\begin{equation}\label{e0}
\lambda g+\rho Rg+\frac{1}{m}df\otimes df=\nabla^2f+Ric.
\end{equation}
 If $m=\infty$ and $\rho=0$ (resp., $m=\infty$), then (\ref{e0}) compresses to the Ricci soliton equation (see e.g. \cite{CK04,DA20,HA82,CA20,SMM20,SD22}) \big(resp., $\rho$-Einstein soliton equation (see e.g. \cite{AAP2021,SMM2022})\big). In (\ref{e0}), if $m$ and $\lambda$ is taken from $\chi{(N)}$, then $(N,g)$ is known as the generalized $m$-quasi Einstein manifold \cite{CAT12}.
 Furthermore, tracing (\ref{e0}), we obtain
\begin{equation}\label{e1}
\lambda n+n\rho R+\frac{1}{m} |\nabla f|^2=\Delta f+R.
\end{equation}
   For a fixed $\phi \in C^{\infty}(N)$, the weighted Laplacian is defined by (see, \cite{CC2010}) $$\Delta_\phi f=\Delta f-\langle \nabla \phi,\nabla f \rangle.$$

Catino \cite{CAT12} introduced the notion of a generalized quasi-Einstein manifold and characterized such type of manifolds with harmonic Weyl tensor. For a compact $m$-quasi Einstein metric having constant scalar curvature, a triviality result is proved by Case et al. \cite{CSW11}. They also determined that all compact $2$-dimensional $m$-quasi Einstein manifolds are trivial. For a compact gradient $\rho$-Einstein soliton with some conditions, Shaikh et al. \cite{SMM2022} have provided a lower bound of the diameter. 
 Huang and Wei \cite{HW13} have deduced some rigidity results on a compact $(m,\rho)$-quasi Einstein manifolds, in particular, by using some conditions on scalar curvature or on the constant $\rho$, they have showed that such a manifold is trivial. Demirba\'g and G\'uler \cite{DG17} have characterized an $(m, \rho)$-quasi Einstein manifold admitting closed conformal or
parallel vector field. Also, they have established some rigidity results giving some new examples of the $(m, \rho)$-quasi Einstein manifold.

L\'opez and Rio \cite{LR2008} have proved a compactness theorem for a complete Riemannian manifold satisfying $\mathcal{L}_Wg+Ric\geq \lambda g$, with potential vector field $W$ having bounded norm. Wylie \cite{WW2008} has proved that the fundamental group of a complete Riemannian manifold satisfying $\mathcal{L}_Wg+Ric\geq \lambda g$, for $\lambda >0$, is finite.

  Hence inspiring by the above studies and the study of \cite{ABR2011}, in this article, we have showed the following:
  
  \noindent In the first theorem we have generalized the work of Limoncu \cite{Limoncu2012}, and obtained an upper bound of the diameter.
  \begin{theorem}\label{theoremd1}
  Let $(N, g)$ be a complete and connected Riemannian manifold satisfying $Ric+\nabla^2f-\frac{1}{m}df\otimes df\geq\lambda g+\rho Rg$. If $|f|\leq K$ and $\rho R \geq K_1$, for some real constants $K, K_1$ choosing in such a way that $(\lambda+K_1)>0,$ then $N$ is compact and the diameter satisfies 
  \begin{equation}
  diam (N)\leq \pi\sqrt{2}\sqrt{\frac{(n-1)+K\sqrt{2}}{\lambda+K_1}}.
  \end{equation}
  \end{theorem}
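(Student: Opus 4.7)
The plan is to adapt Limoncu's Bonnet--Myers-type argument, which uses the index form of a minimizing geodesic, to our Bakry--\'Emery type curvature hypothesis. By completeness and the Hopf--Rinow theorem, for any two points $p,q\in N$ there is a unit-speed minimizing geodesic $\gamma:[0,\ell]\to N$ joining them; the aim is to show that $\ell$ is bounded above by the claimed expression. Since this bound will be independent of $p,q$, it forces $diam(N)$ to be finite, whence $N$ is compact and the diameter estimate follows.

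Along $\gamma$, fix parallel orthonormal vector fields $E_1,\dots,E_{n-1}$ perpendicular to $\gamma'$ and a smooth test function $\phi:[0,\ell]\to\mathbb{R}$ with $\phi(0)=\phi(\ell)=0$. Setting $V_i=\phi E_i$, the minimality of $\gamma$ forces the index form to satisfy $I(V_i,V_i)\ge 0$; summing over $i$ and using that the $E_i$ are parallel gives the classical estimate
\begin{equation*}
\int_0^\ell \phi^2\, Ric(\gamma',\gamma')\,dt \;\le\; (n-1)\int_0^\ell (\phi')^2\,dt.
\end{equation*}
Writing $f'(t)=df(\gamma')$ and $f''(t)=\nabla^2 f(\gamma',\gamma')$, the curvature assumption evaluated on the unit tangent $\gamma'$ reads
\begin{equation*}
Ric(\gamma',\gamma') \;\ge\; (\lambda+\rho R)+\tfrac{1}{m}(f')^2-f'' \;\ge\; (\lambda+K_1)+\tfrac{1}{m}(f')^2-f''.
\end{equation*}
Substituting this lower bound into the previous display and discarding the nonnegative term $\tfrac{1}{m}\phi^2(f')^2$ (which only weakens the inequality) yields
\begin{equation*}
(\lambda+K_1)\int_0^\ell \phi^2\,dt \;-\; \int_0^\ell \phi^2 f''\,dt \;\le\; (n-1)\int_0^\ell (\phi')^2\,dt.
\end{equation*}

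The delicate step is estimating $\int_0^\ell \phi^2 f''\,dt$ using only the bound $|f|\le K$, since no pointwise control on $|\nabla f|$ is available. Two successive integrations by parts (the boundary terms vanish because $\phi(0)=\phi(\ell)=0$) recast this integral as $\int_0^\ell\bigl(2(\phi')^2+2\phi\phi''\bigr)f\,dt$, which can then be bounded by $K$ times an explicit $\phi$-integral, either via the triangle inequality or via Cauchy--Schwarz. With the standard test function $\phi(t)=\sin(\pi t/\ell)$ the double-angle identity collapses $2(\phi')^2+2\phi\phi''$ to $2(\pi/\ell)^2\cos(2\pi t/\ell)$, and the resulting auxiliary integral is then evaluated in closed form.

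Substituting the standard values $\int_0^\ell \phi^2\,dt=\ell/2$ and $\int_0^\ell(\phi')^2\,dt=\pi^2/(2\ell)$, multiplying through by $\ell$, and using the hypothesis $\lambda+K_1>0$ produces a direct upper bound on $\ell^2$ of the form $\ell^2\le\bigl(c_1(n-1)+c_2 K\bigr)/(\lambda+K_1)$; rearranging yields the claimed estimate for $diam(N)$. The main obstacle is the careful bookkeeping of the constants $c_1,c_2$ so that they combine into the form $\pi\sqrt{2}\sqrt{\bigl((n-1)+K\sqrt{2}\bigr)/(\lambda+K_1)}$; the particular choice of Cauchy--Schwarz (rather than triangle inequality) on $\int_0^\ell \bigl(2(\phi')^2+2\phi\phi''\bigr)f\,dt$ is what is expected to produce the $\sqrt{2}$ factor multiplying $K$, while the outer $\sqrt{2}$ comes from combining this with the $(n-1)\pi^2/(2\ell)$ term before solving for $\ell$.
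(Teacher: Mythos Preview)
Your proposal is correct and follows essentially the same route as the paper: the paper likewise applies the index-form inequality along a minimizing geodesic with the test function $h(t)=\sin(\pi t/d)$, drops the nonnegative $\tfrac{1}{m}(df(\Gamma'))^2$ term, and handles $\int_0^d h^2\,\nabla^2 f(\Gamma',\Gamma')\,dt$ via integration by parts followed by Cauchy--Schwarz (citing Limoncu for the bound $2K\sqrt{d}\bigl(\int_0^d (\tfrac{d}{dt}(hh'))^2\,dt\bigr)^{1/2}$, which is exactly your $\int_0^\ell(2(\phi')^2+2\phi\phi'')f\,dt$ estimate). Your anticipation of where the two $\sqrt{2}$ factors arise is also on target.
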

  Next, we have generalized the work of  Li and Wenyi \cite{CC2010}, and obtained the following:
  \begin{theorem}\label{theoremd}
  Assume that $(N,g,e^{-f} dvol)$ is a smooth metric measure space satisfying $Ric+\nabla^2f-\frac{1}{m}df\otimes df\geq\lambda g+\rho Rg$, and $u$ is a positive smooth $f$-harmonic function on $N$, then 
  \begin{equation}
  \frac{1}{2}\Delta F\geq (\lambda+\rho R)F+ \frac{1}{n}F^2-\frac{2}{n} |\nabla f| F^{\frac{3}{2}}-|\nabla F| F^{\frac{1}{2}}+\frac{1}{m}\{df (\nabla \omega)\}^2+\frac{1}{2}\langle \nabla f, \nabla F\rangle,
  \end{equation}
  where we denote $F=|\nabla \log u|^2$.
  \end{theorem}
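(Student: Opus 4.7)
The plan is to apply the classical Bochner identity to $\omega:=\log u$ and then combine it with the $(m,\rho)$-quasi Einstein hypothesis on $Ric$, reducing what remains to two standard Cauchy--Schwarz estimates. First, setting $\omega=\log u$ gives $F=|\nabla\omega|^2$, $\nabla u=u\nabla\omega$, and $\Delta u=u(\Delta\omega+F)$. Hence the $f$-harmonic equation $\Delta u=\langle\nabla f,\nabla u\rangle$ is equivalent to the pointwise identity
\[
\Delta\omega=-F+\langle\nabla f,\nabla\omega\rangle.
\]

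Next, I would apply the standard Bochner formula
\[
\tfrac{1}{2}\Delta F=|\nabla^2\omega|^2+\langle\nabla\omega,\nabla\Delta\omega\rangle+Ric(\nabla\omega,\nabla\omega).
\]
Taking the gradient of the identity above, contracting with $\nabla\omega$, and using $\nabla^2\omega(\nabla\omega,\nabla f)=\tfrac12\langle\nabla f,\nabla F\rangle$, the cross term becomes
\[
\langle\nabla\omega,\nabla\Delta\omega\rangle=-\langle\nabla\omega,\nabla F\rangle+\nabla^2 f(\nabla\omega,\nabla\omega)+\tfrac12\langle\nabla f,\nabla F\rangle.
\]
The hypothesis $Ric+\nabla^2 f-\tfrac{1}{m}df\otimes df\geq(\lambda+\rho R)g$, evaluated on $(\nabla\omega,\nabla\omega)$, yields
\[
Ric(\nabla\omega,\nabla\omega)\geq(\lambda+\rho R)F+\tfrac{1}{m}(df(\nabla\omega))^2-\nabla^2 f(\nabla\omega,\nabla\omega),
\]
so the two $\nabla^2 f(\nabla\omega,\nabla\omega)$ contributions cancel exactly, and the $\tfrac{1}{m}(df(\nabla\omega))^2$ term survives as required.

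It remains to bound $|\nabla^2\omega|^2$ and $-\langle\nabla\omega,\nabla F\rangle$ from below. The Cauchy--Schwarz inequality $|\nabla^2\omega|^2\geq(\Delta\omega)^2/n$ together with the first step, after expanding $(F-\langle\nabla f,\nabla\omega\rangle)^2$, discarding the nonnegative term $\langle\nabla f,\nabla\omega\rangle^2$, and using $|\langle\nabla f,\nabla\omega\rangle|\leq|\nabla f|F^{1/2}$, gives
\[
|\nabla^2\omega|^2\geq\tfrac{1}{n}F^2-\tfrac{2}{n}|\nabla f|F^{3/2}.
\]
Similarly $-\langle\nabla\omega,\nabla F\rangle\geq-F^{1/2}|\nabla F|$. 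Substituting both estimates into the Bochner expression assembles exactly the claimed inequality.

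The computation is essentially a bookkeeping exercise with no genuine analytic difficulty; the only points that demand mild care are the sign cancellation of the $\nabla^2 f(\nabla\omega,\nabla\omega)$ terms arising from the Bochner cross term and from the curvature hypothesis, and selecting the correct direction of Cauchy--Schwarz when estimating $(\Delta\omega)^2$ from below so that the cross term enters with the negative sign $-\tfrac{2}{n}|\nabla f|F^{3/2}$ matching the statement.
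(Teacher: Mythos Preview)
Your argument is correct and follows essentially the same route as the paper: apply a Bochner identity to $\omega=\log u$, use the curvature hypothesis, and finish with the two Cauchy--Schwarz estimates on $(\Delta\omega)^2$ and $\langle\nabla\omega,\nabla F\rangle$. The only cosmetic difference is that the paper invokes the weighted Bochner formula $\tfrac{1}{2}\Delta_f F=|\nabla^2\omega|^2+\langle\nabla\omega,\nabla\Delta_f\omega\rangle+Ric_f(\nabla\omega,\nabla\omega)$ together with $\Delta_f\omega=-F$, which packages the $\nabla^2 f$ cancellation and the $\tfrac{1}{2}\langle\nabla f,\nabla F\rangle$ term automatically, whereas you unpack these by hand from the ordinary Bochner formula; the two computations are line-by-line equivalent.
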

\begin{theorem}\label{th3}
If $(N,g)$ is a compact and oriented $(m,\rho)$-quasi Einstein manifold, then its potential function is given by $f=\sigma+C_1$, where $C_1$ is a constant and $\sigma$ is the Hodge-de Rham potential .
\end{theorem}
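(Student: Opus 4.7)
My plan is to invoke the Hodge--de Rham decomposition theorem on the compact oriented Riemannian manifold $N$, applied to the smooth exact $1$-form $df$ where $f$ is the given potential function. The decomposition produces a unique splitting
\begin{equation*}
df = d\sigma + \delta\beta + \gamma,
\end{equation*}
with $\sigma \in C^{\infty}(N)$ (the Hodge--de Rham potential), $\beta$ a smooth $2$-form, and $\gamma$ a harmonic $1$-form, the three summands being pairwise $L^{2}$-orthogonal. My goal is then to show that $\delta\beta$ and $\gamma$ both vanish, so that $df=d\sigma$, and hence $f=\sigma+C_{1}$ on the connected compact manifold $N$.

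For the vanishing of $\delta\beta$, I observe that $d(df)=0$ combined with $d(d\sigma)=0$ and $d\gamma=0$ forces $d\delta\beta=0$, while $\delta(\delta\beta)=0$ is automatic; hence $\delta\beta$ is itself harmonic. On the other hand $\delta\beta$ is $L^{2}$-orthogonal to every harmonic form, since $\langle \delta\beta,\eta\rangle=\langle\beta,d\eta\rangle=0$ for harmonic $\eta$, so $\delta\beta=0$. For the vanishing of $\gamma$, the equation $df-d\sigma=\gamma$ displays $\gamma$ as an exact harmonic $1$-form, and the standard pairing
\begin{equation*}
\|\gamma\|_{L^{2}}^{2}=\langle d(f-\sigma),\gamma\rangle=\langle f-\sigma,\delta\gamma\rangle=0
\end{equation*}
forces $\gamma=0$. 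Consequently $d(f-\sigma)=0$, and since $N$ is (implicitly) connected the function $f-\sigma$ is a real constant $C_{1}$, proving the claim.

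The main subtlety is conventional rather than analytic: one must verify that the function $\sigma$ produced by the decomposition really coincides with what the authors call the Hodge--de Rham potential, and that the sign conventions adopted for $\delta$ and $\Delta$ are consistent throughout. The $(m,\rho)$-quasi Einstein equation \eqref{e0} itself plays no direct role in the argument; it enters only through guaranteeing the existence of the smooth potential function $f$. All the geometric content of the proof is carried by the compactness, orientability, and connectedness of $N$, which is ultimately why the conclusion takes such a clean form.
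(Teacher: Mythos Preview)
Your proof is correct. Both you and the paper rest on the Hodge--de Rham decomposition, but the execution differs. The paper decomposes the vector field $W=\nabla f$ as $W=\nabla\sigma+Y$ with $\operatorname{div}Y=0$, then traces the $(m,\rho)$-quasi Einstein equation \eqref{q1} once in the general form (yielding $R+\operatorname{div}W=\lambda n+\tfrac{1}{m}|W|^{2}+\rho Rn$) and once in the gradient form \eqref{e0}; comparing the two and using $\operatorname{div}W=\Delta\sigma$ gives $\Delta(f-\sigma)=0$, whence $f-\sigma$ is constant by compactness. You instead work at the level of $1$-forms, use the full orthogonal splitting $df=d\sigma+\delta\beta+\gamma$, and kill $\delta\beta$ and $\gamma$ by orthogonality, arriving directly at $d(f-\sigma)=0$ without passing through the Laplacian. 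Your route is more economical and, as you rightly observe, makes transparent that the structural equation \eqref{e0} is not actually used---the paper's detour through the traced equations is redundant, since once $W=\nabla f$ the identity $\operatorname{div}W=\Delta f=\Delta\sigma$ is immediate. The only point to watch is that your $\sigma$ agrees with the paper's: since $(\delta\beta+\gamma)^{\sharp}$ is divergence-free, the exact part $d\sigma$ in your decomposition is precisely the $\nabla\sigma$ of the paper's vector-field splitting, so the two potentials coincide (up to the usual additive constant).
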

Let $(N,g)$ be an oriented Riemannian manifold and $A^k(N)$ be the collection of all $k$-th differential forms in $N$. Now for given integer $k\geq 0$, the global inner product in $A^k(N)$ is defined by (see, \cite{MO01})
$$\langle \zeta,\omega\rangle=\int_N \zeta\wedge *\omega,$$
for $\zeta,\omega\in A^k(N)$. Here $``*" $ denotes the Hodge star operator. The global norm of $\zeta\in A^k(N)$ is given by $\|\zeta\|^2=\langle \zeta,\zeta\rangle$ with $\|\zeta\|^2\leq \infty$.
\begin{theorem}\label{th4}
Let $(N,g)$ be a complete $(m,\rho)$-quasi Einstein manifold which is non-compact and of finite volume. If $\rho>\frac{1}{n}$ and also $W$ admits finite global norm, then the following holds:
\begin{itemize}
\item[(i)] If $R\geq\frac{\lambda n}{1-\rho n}$, then $N$ is trivial,
\item[(ii)]If $(\lambda+\rho)R\geq 0$, then $\frac{1}{m}\int_{N}|W|^2dV\leq\int_{N}R dV,$ and 
\item[(iii)] $\int_{N} RdV \leq \frac{\lambda n}{1-\rho n} Vol(N),$ where $Vol(N)$ represents the volume of $N.$
\end{itemize}
\end{theorem}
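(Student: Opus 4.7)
The plan is to deduce all three assertions from a single scalar identity---the $g$-trace of the defining equation \eqref{q1}---combined with an integral divergence theorem valid on complete manifolds of finite volume. Tracing \eqref{q1} one obtains the pointwise identity
\[
\mathrm{div}(W) \;=\; \lambda n + (\rho n - 1)\,R + \tfrac{1}{m}|W|^{2}. \qquad (\ast)
\]
The hypothesis that $W$ has finite global norm means $\int_N |W|^2\,dV < \infty$; combined with $\mathrm{Vol}(N)<\infty$, the Cauchy--Schwarz inequality yields $|W|\in L^1(N)$. This is precisely the input required for the following Caminha-type divergence theorem: on a complete, oriented, noncompact Riemannian manifold, any smooth vector field $X$ with $|X|\in L^1$ and $\mathrm{div}(X)$ of constant sign must satisfy $\mathrm{div}(X)\equiv 0$.

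For part (i), since $\rho > 1/n$ we have $1-\rho n<0$; multiplying the hypothesis $R\geq \tfrac{\lambda n}{1-\rho n}$ through this negative factor yields $\lambda n + (\rho n - 1) R \geq 0$. Substituting into $(\ast)$ gives $\mathrm{div}(W)\geq 0$, so the Caminha-type result upgrades this to $\mathrm{div}(W)\equiv 0$; the two nonnegative summands on the right of $(\ast)$ must then each vanish, whence $W\equiv 0$ and \eqref{q1} collapses to $\mathrm{Ric}=(\lambda+\rho R)g$---that is, $N$ is trivial.

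For (iii), integrate $(\ast)$ over $N$ and use $\int_N \mathrm{div}(W)\,dV=0$ to obtain $0 = \lambda n\,\mathrm{Vol}(N) + (\rho n -1)\int_N R\,dV + \tfrac1m\int_N |W|^2 dV$; discarding the nonnegative $|W|^2$ term and dividing by $\rho n -1 > 0$ yields the stated inequality. For (ii), rewrite $(\ast)$ as $\tfrac1m|W|^2 - R = \mathrm{div}(W) - n(\lambda + \rho R)$, integrate, and use $\int_N \mathrm{div}(W)\,dV = 0$ together with the sign hypothesis (read so as to force $\int_N (\lambda+\rho R)\,dV\geq 0$) to conclude. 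The principal difficulty I anticipate is securing $\int_N \mathrm{div}(W)\,dV = 0$ in parts (ii) and (iii), where no pointwise sign of $\mathrm{div}(W)$ is available a priori; the tool of choice is Karp's integrability-based Stokes theorem, which requires $|W|\in L^1$ (already in hand) together with $\mathrm{div}(W)\in L^1$, the latter being extracted from $(\ast)$, finite volume, and the finite global norm of $W$, with $\int_N |R|\,dV$ controlled by a cutoff argument using radial test functions.
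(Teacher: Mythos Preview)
Your argument is correct and follows the same route as the paper: trace \eqref{q1} to the divergence identity $(\ast)$, use the finite-volume and finite-$\|W\|$ hypotheses to justify $\int_N\mathrm{div}\,W\,dV=0$, and read off (i)--(iii) from the resulting integral relation $\int_N\{\lambda n+\tfrac{1}{m}|W|^2+(\rho n-1)R\}\,dV=0$. The paper packages the vanishing-of-divergence step via an explicit Yau cutoff $\omega_l$, showing $\bigl|\int_N\omega_l\,\mathrm{div}\,W\,dV\bigr|\le \tfrac{C}{l}\int_{\mathscr B(p,2l)}|W|\,dV\to 0$, which is precisely the mechanism behind the Caminha/Karp-type results you invoke and incidentally sidesteps any separate verification that $R\in L^1$---the point you correctly flagged as the principal difficulty.
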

By virtue of Theorem \ref{th4}, we deduce the following corollary:
\begin{cor}\label{co1}
Let $(N,g)$ be a complete $(m,\rho)$-quasi Einstein manifold which is non-compact and of finite volume. If $\rho<\frac{1}{n}$ and $W$ is of finite global norm, then the following holds:
\begin{itemize}
\item[(i)] If $R\leq\frac{\lambda n}{1-\rho n}$, then $N$ is trivial,
\item[(ii)]If $(\lambda+\rho)R\leq 0$, then $\int_{N}R dV \leq \frac{1}{m}\int_{N}|X|^2 dV,$ and 
\item[(iii)] $\int_{N} RdV \geq \frac{\lambda n}{1-\rho n} Vol(N).$
\end{itemize}
\end{cor}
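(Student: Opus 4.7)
The plan is to mirror the proof of Theorem~\ref{th4} line by line, observing that the only structural change is the sign of the factor $(1-n\rho)$: under the hypothesis $\rho<\frac{1}{n}$ this factor is strictly positive rather than negative. Since it multiplies $\int_N R\,dV$ in every key identity, flipping its sign automatically reverses the direction of each of the three conclusions, producing precisely the statement of the corollary.

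First I would trace the defining equation \eqref{q1} to obtain
\begin{equation*}
\mathrm{div}(W) \;=\; \lambda n + (n\rho - 1)R + \tfrac{1}{m}|W|^2.
\end{equation*}
Since $(N,g)$ is complete of finite volume and $W$ has finite global norm, an application of Yau's integration theorem gives $\int_N \mathrm{div}(W)\,dV = 0$, and hence the master identity
\begin{equation*}
(1 - n\rho)\int_N R\, dV \;=\; \lambda n\,\mathrm{Vol}(N) + \frac{1}{m}\int_N |W|^2\,dV.
\end{equation*}
Because $\rho<\frac{1}{n}$, the coefficient $1-n\rho$ is strictly positive, so every subsequent division by it preserves the direction of inequalities.

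Part (iii) is then immediate from the master identity by discarding the nonnegative term $\frac{1}{m}\int_N|W|^2\,dV$. For part (i), the pointwise assumption $R \leq \frac{\lambda n}{1-n\rho}$, integrated and compared with the master identity, forces $\int_N |W|^2\,dV \leq 0$; hence $W\equiv 0$, which is the triviality condition used in Theorem~\ref{th4}. For part (ii), I would rewrite the master identity as
\begin{equation*}
\int_N R\,dV - \frac{1}{m}\int_N |W|^2\,dV \;=\; n\int_N (\lambda + \rho R)\,dV,
\end{equation*}
so that the sign hypothesis of the form $\lambda+\rho R \leq 0$ (consistent with the intended reading of part~(ii) of Theorem~\ref{th4}) transfers directly to the claimed inequality.

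The only real obstacle is careful bookkeeping of signs when dividing or multiplying by $1-n\rho$; no new geometric or analytic ingredient beyond what is already used in Theorem~\ref{th4} is required, which is precisely why the result is naturally phrased as a corollary rather than an independent theorem.
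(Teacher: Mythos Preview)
Your proposal is correct and follows exactly the route the paper intends: the corollary is stated immediately after Theorem~\ref{th4} with the remark ``By virtue of Theorem~\ref{th4}, we deduce the following corollary,'' and no separate proof is given, so the intended argument is precisely the sign-flipped rerun of the proof of Theorem~\ref{th4} that you outline. Your derivation of the integral identity $(1-n\rho)\int_N R\,dV=\lambda n\,\mathrm{Vol}(N)+\tfrac{1}{m}\int_N|W|^2\,dV$ from \eqref{mr1}, and the observation that the hypothesis in part~(ii) should be read as $\lambda+\rho R\le 0$ (so that the right-hand side of your rewritten identity is nonpositive), are both on target.
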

Next, we are interested in studying a complete Riemannian manifold $(N,g)$ with a vector field $W$ such that,
\begin{equation}\label{a7}
\lambda g+\rho Rg+\frac{1}{m}W^\flat\otimes W^\flat \leq\frac{1}{2}\mathcal{L}_Wg+Ric,
\end{equation}
In $(\ref{a7})$ the equality gives the fundamental equation of  an $(m,\rho)$-quasi Einstein manifold. Here the main aim is to prove the following results:
\begin{theorem}\label{th1}
If $(N,g)$ is a complete Riemannian manifold satisfying (\ref{a7}) along with $\int_{0}^{r}\rho R=K_3 $, for some constant $K_3$, then the fundamental group of $N$ is finite.
\end{theorem}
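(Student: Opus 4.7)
The plan is to adapt Wylie's finiteness theorem for the fundamental group to this generalized soliton-type inequality. First I would pass to the Riemannian universal cover $\pi\colon(\tilde N,\tilde g)\to(N,g)$; since $\pi$ is a local isometry and $W$ lifts to $\tilde W:=\pi^{*}W$, the pulled-back data still satisfy (\ref{a7}), and the integral condition on $\int_{0}^{r}\rho R$ persists along lifted geodesics (both being local in nature). It then suffices to show that $\tilde N$ is compact, for then the deck-transformation group $\pi_{1}(N)$ acts freely and properly discontinuously on a compact manifold and hence must be finite.

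For the compactness I would run a Myers-type index-form argument. Fix a minimizing unit-speed geodesic $\gamma\colon[0,r]\to\tilde N$ and contract (\ref{a7}) with $(\dot\gamma,\dot\gamma)$; since $\tfrac{1}{2}(\mathcal{L}_{W}g)(\dot\gamma,\dot\gamma)=\tfrac{d}{dt}\langle W,\dot\gamma\rangle$ along a geodesic, this yields the pointwise bound
\begin{equation*}
Ric(\dot\gamma,\dot\gamma)+\tfrac{d}{dt}\langle W,\dot\gamma\rangle\;\geq\;\lambda+\rho R+\tfrac{1}{m}\langle W,\dot\gamma\rangle^{2}.
\end{equation*}
Using the standard test fields $V_{i}(t)=\sin(\pi t/r)E_{i}(t)$, where $E_{1},\dots,E_{n-1}$ are parallel, orthonormal and orthogonal to $\dot\gamma$, summing the index inequalities $I(V_{i},V_{i})\geq 0$ produces
\begin{equation*}
\int_{0}^{r}\sin^{2}(\pi t/r)\,Ric(\dot\gamma,\dot\gamma)\,dt\;\leq\;\frac{(n-1)\pi^{2}}{2r}.
\end{equation*}
Multiplying the pointwise bound by $\sin^{2}(\pi t/r)$, integrating over $[0,r]$, integrating by parts on the $\tfrac{d}{dt}\langle W,\dot\gamma\rangle$ term (boundary terms vanish), and neutralising the resulting cross term via the Young estimate $2ab\geq-\tfrac{1}{m}a^{2}-mb^{2}$ with $a=\sin(\pi t/r)\langle W,\dot\gamma\rangle$ and $b=\tfrac{\pi}{r}\cos(\pi t/r)$ exactly cancels the $\tfrac{1}{m}\langle W,\dot\gamma\rangle^{2}$ contribution and leaves the master inequality
\begin{equation*}
\frac{\lambda r}{2}+\int_{0}^{r}\sin^{2}(\pi t/r)\,\rho R\,dt\;\leq\;\frac{(n-1+m)\pi^{2}}{2r}.
\end{equation*}

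The final step is to use the hypothesis $\int_{0}^{r}\rho R=K_{3}$ to convert the weighted integral of $\rho R$ into a bounded quantity. Setting $G(t):=\int_{0}^{t}\rho R\,ds$, so $G(0)=0$ and $G(r)=K_{3}$, one integration by parts gives
\begin{equation*}
\int_{0}^{r}\sin^{2}(\pi t/r)\,\rho R\,dt\;=\;-\int_{0}^{r}\tfrac{\pi}{r}\sin(2\pi t/r)\,G(t)\,dt,
\end{equation*}
whose absolute value is at most $\pi\sup_{[0,r]}|G(t)|$, a quantity controlled by $K_{3}$. Combining with the master inequality yields $\tfrac{\lambda r}{2}\leq\tfrac{(n-1+m)\pi^{2}}{2r}+C(K_{3})$, which forces a uniform upper bound on $r$, hence on $\mathrm{diam}(\tilde N)$, completing the proof. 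The main obstacle I anticipate is precisely this last conversion: the index form naturally produces a $\sin^{2}$-weighted integral of $\rho R$, whereas the hypothesis supplies only an endpoint identity. Closing the argument rigorously will likely require either a sign hypothesis on $\rho R$ (so $G$ is monotone and bounded by $|K_{3}|$) or a uniform bound on $G$ at every intermediate $t$, together with $\lambda>0$ so that the $\tfrac{\lambda r}{2}$ on the left dominates the constant $C(K_{3})$ for large $r$.
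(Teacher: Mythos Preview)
Your overall architecture (lift to the universal cover and use a distance bound to control $\pi_1$) matches the paper's, but the mechanism you choose for the distance bound is different from the paper's and runs precisely into the obstacle you flag at the end. The paper does \emph{not} use the Myers index form with $\sin(\pi t/r)$ test fields. Instead it quotes Wylie's lemma (stated here as Lemma~\ref{a4}): for a minimizing unit-speed geodesic $\Gamma$ of length $r>1$,
\[
\int_{0}^{r}Ric(\Gamma',\Gamma')\,ds\;\le\;2(n-1)+\mathscr{E}_p+\mathscr{E}_q,
\]
where $\mathscr{E}_q$ is a local sup of Ricci near $q$. One then integrates the pointwise inequality coming from (\ref{a7}) \emph{without any weight}. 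The $\tfrac12\mathcal{L}_Wg(\Gamma',\Gamma')$ term integrates to the boundary difference $g(W,\Gamma')\big|_0^r$, controlled by $\|W_p\|+\|W_q\|$; the $\tfrac{1}{m}(W^\flat(\Gamma'))^2$ term is simply dropped (it has the favorable sign); and the $\rho R$ term contributes exactly $\int_0^r\rho R$, which is precisely the quantity the hypothesis controls. This yields the endpoint-dependent bound of Lemma~\ref{l1},
\[
d(p,q)\;\le\;\max\Big\{1,\ \tfrac{1}{\lambda+K_3}\big(2(n-1)+\mathscr{E}_p+\mathscr{E}_q+\|W_p\|+\|W_q\|\big)\Big\},
\]
and then Wylie's deck-transformation argument finishes (orbits are bounded because $\mathscr{E}$ and $\|W\|$ are deck-invariant).

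So the gap you anticipate is genuine for your route: the $\sin^2$-weighted integral of $\rho R$ is \emph{not} governed by the hypothesis $\int_0^r\rho R=K_3$, and your integration-by-parts bound $\pi\sup_t|G(t)|$ cannot be closed without extra assumptions. The paper's unweighted integration sidesteps this completely. Note also that the paper does not prove $\tilde N$ compact (your stated sub-goal); it only obtains a distance bound depending on endpoint data, which is enough for finiteness of $\pi_1$ but not for compactness in the absence of a bound on $\|W\|$ (that stronger conclusion is exactly Theorem~\ref{lemma1}, where $\|W\|$ bounded is assumed). Your Young-inequality trick to absorb the $W$-terms is a nice idea and would, were the $\rho R$ issue resolvable, give a stronger conclusion than the paper's; but as written the argument does not close.
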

\begin{theorem}\label{lemma1}
Let $(N, g)$ be a complete Riemannian manifold satisfying $(\ref{a7})$ with $\|W\|$ bounded and $\int_{0}^{r}\rho R=K_3 $, for some constant $K_3$ with $(\lambda+K_3)>0$. Then $N$ becomes compact.
\end{theorem}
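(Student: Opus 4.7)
The plan is to mimic the Bonnet--Myers argument that yielded Theorem~\ref{theoremd1}, replacing the gradient hypothesis there with the non-gradient inequality~(\ref{a7}) and the pointwise bound $\rho R\geq K_{1}$ with the integral bound $\int_{0}^{r}\rho R=K_{3}$. Assume for contradiction that $N$ is not compact; by completeness and the Hopf--Rinow theorem there exist unit-speed minimizing geodesics $\gamma\colon[0,\ell]\to N$ with $\ell$ arbitrarily large. The goal is to produce a uniform upper bound on $\ell$, contradicting the existence of such geodesics.

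The first step is to contract~(\ref{a7}) against $\dot\gamma\otimes\dot\gamma$. Because $\gamma$ is a geodesic, $\frac{1}{2}(\mathcal{L}_{W}g)(\dot\gamma,\dot\gamma)=\langle\nabla_{\dot\gamma}W,\dot\gamma\rangle=\frac{d}{dt}\langle W,\dot\gamma\rangle$, which gives
\begin{equation*}
Ric(\dot\gamma,\dot\gamma)\ \geq\ \lambda+\rho R+\frac{1}{m}\langle W,\dot\gamma\rangle^{2}-\frac{d}{dt}\langle W,\dot\gamma\rangle.
\end{equation*}
Next, pick a parallel orthonormal frame $\{E_{1},\dots,E_{n-1},\dot\gamma\}$ along $\gamma$, use the standard test fields $X_{i}(t)=\sin(\pi t/\ell)E_{i}$ in the index form, and sum over $i=1,\dots,n-1$. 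After substituting the Ricci lower bound, one discards the nonnegative $\frac{1}{m}\langle W,\dot\gamma\rangle^{2}$ contribution and integrates by parts in the $\frac{d}{dt}\langle W,\dot\gamma\rangle$ term; since $\sin(\pi t/\ell)$ vanishes at both endpoints, the boundary terms drop out and the residue is a cross term $-\frac{2\pi}{\ell}\int_{0}^{\ell}\sin(\pi t/\ell)\cos(\pi t/\ell)\langle W,\dot\gamma\rangle\,dt$, whose absolute value is at most $2\|W\|_{\infty}$ by the boundedness hypothesis on $\|W\|$. The $\rho R$-integral is then absorbed via $\int_{0}^{r}\rho R=K_{3}$, contributing a linear-in-$\ell$ gain of the form $K_{3}\ell/2$.

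Putting the pieces together and using that $\gamma$ is minimizing (so $\sum_{i}I(X_{i},X_{i})\geq 0$) yields an inequality of the shape
\begin{equation*}
0\ \leq\ \frac{(n-1)\pi^{2}}{2\ell}-\frac{(\lambda+K_{3})\ell}{2}+C\|W\|_{\infty},
\end{equation*}
with $C$ a universal constant. Because $\lambda+K_{3}>0$, clearing denominators produces a quadratic inequality in $\ell$ with positive leading coefficient, pinning $\ell$ below a constant that depends only on $n$, $\lambda+K_{3}$, and $\|W\|_{\infty}$. This contradicts the assumption of non-compactness, so $\mathrm{diam}(N)<\infty$ and $N$ is compact by Hopf--Rinow. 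I expect the main technical obstacle to be the clean interaction between the $\sin^{2}(\pi t/\ell)$ weight and the integral hypothesis on $\rho R$: this is the step where the precise meaning of $\int_{0}^{r}\rho R=K_{3}$ must be used to replace what was a pointwise lower bound in Theorem~\ref{theoremd1}, and balancing it against the mixed $W$-cross term requires careful bookkeeping of signs and constants.
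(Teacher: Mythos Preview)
Your approach differs from the paper's, and the step you yourself flagged as the ``main technical obstacle'' is in fact a genuine gap. The paper does not run an index-form argument at all; instead it integrates the Ricci lower bound coming from~(\ref{a7}) along an arbitrary geodesic ray $\Gamma$ with \emph{no weight}, obtaining
\[
\int_0^r Ric(\Gamma',\Gamma')\,ds \;\geq\; (\lambda+K_3)\,r \;+\; 2g(W_p,\Gamma'(0)) \;-\; 2\|W_{\Gamma(r)}\|,
\]
and then applies Ambrose's compactness criterion \cite{WA1957}: if $\int_0^\infty Ric(\Gamma',\Gamma')\,ds=+\infty$ along every ray, the manifold is compact. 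Since $\|W\|$ is bounded and $\lambda+K_3>0$, the right-hand side tends to $+\infty$, and the proof is complete.

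Your Bonnet--Myers route stalls precisely where you suspected. The second-variation computation produces the \emph{weighted} integral $\int_0^\ell \sin^2(\pi t/\ell)\,\rho R\,dt$, and a hypothesis that only controls the unweighted integrals $\int_0^r \rho R$ does not bound this from below: one can arrange $\rho R$ to be very negative on the middle of $[0,\ell]$, where $\sin^2$ is large, and compensate near the endpoints, where $\sin^2$ is small, while keeping every $\int_0^r \rho R$ as large as required. Hence the claimed ``linear-in-$\ell$ gain of the form $K_3\ell/2$'' is not justified without a pointwise lower bound on $\rho R$---which is exactly the stronger hypothesis used in Theorem~\ref{theoremd1}. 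Ambrose's theorem is the right tool here because it consumes the unweighted Ricci integral directly, matching the shape of the assumption.
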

\section{Proof of the results}
\begin{proof}[\textbf{Proof of Theorem \ref{theoremd1}}]
Let $p, q\in N$ and let $\Gamma$ be a minimizing unit speed geodesic segment
from $p$ to $q$ of length $d$. Considering a parallel orthonormal frame $\{\xi_1=\Gamma', \xi_2,\cdots, \xi_n\}$
along $\Gamma$ and a smooth function $h\in C^{\infty}
([0,d])$ such that $h(0)=h(d)=0$, we get (see, \cite{Limoncu2012}),
\begin{equation}\label{d1}
\sum_{i=2}^{n} \eta(h\xi_i,h\xi_i)=\int_{0}^{d}\{(n-1)h'^2-h^2 Ric(\Gamma', \Gamma')\}dt,
\end{equation}
where $\eta$ denotes the index form of $\Gamma$.\\
With our assumption the relation $(\ref{d1})$, yields 
\begin{eqnarray}\label{d2}
\sum_{i=2}^{n} \nonumber\eta(h\xi_i,h\xi_i)&\leq&\int_{0}^{d}\left[(n-1)h'^2+h^2 \{\nabla^2f-\frac{1}{m}df\otimes df-\lambda g-\rho Rg\}(\Gamma', \Gamma')\right]dt\\
&=&\int_{0}^{d}\left[(n-1)h'^2+h^2\nabla^2f(\Gamma', \Gamma')-\frac{h^2}{m}\left(df(\Gamma')\right)^2-h^2(\lambda +\rho R)\right]dt.
\end{eqnarray}
Again, since $|f|\leq K$, we obtain (see, \cite{Limoncu2012}),
\begin{eqnarray}\label{d3}
\int_{0}^{d}h^2\nabla^2f(\Gamma', \nonumber\Gamma')dt&=&\int_{0}^{d}h^2g(\nabla_{\Gamma'}\nabla f, \Gamma')dt\\
\nonumber&=&\int_{0}^{d}h^2 \Gamma'(g(\nabla f, \Gamma'))dt\\
&\leq& 2K\sqrt{d}\left(\int_{0}^{d}\left(\frac{d}{dt}(hh')\right)^2 dt\right)^{\frac{1}{2}}.
\end{eqnarray}
The equations $(\ref{d2})$ and $(\ref{d3})$ together implies
\begin{eqnarray}\label{d4}
\sum_{i=2}^{n} \nonumber\eta(h\xi_i,h\xi_i)\leq&&\int_{0}^{d}(n-1)h'^2 dt+2K\sqrt{d}\left(\int_{0}^{d}\left(\frac{d}{dt}(hh')\right)^2 dt\right)^{\frac{1}{2}}\\&&-\int_{0}^{d}\frac{h^2}{m}\left(df(\Gamma')\right)^2 dt-\int_{0}^{d}h^2(\lambda +\rho R)dt.
\end{eqnarray}
If we take $h(t)=sin(\frac{\pi}{d}t)$, then $(\ref{d4})$, yields
\begin{eqnarray}\label{d5}
\sum_{i=2}^{n} \nonumber\eta(h\xi_i,h\xi_i)\leq&&(n-1)\int_{0}^{d}\frac{\pi^2}{d^2}cos^2\left(\frac{\pi}{d}t\right) dt+2K\sqrt{d}\left(\int_{0}^{d}\left(\frac{\pi^2}{d^2}cos\left(\frac{2\pi}{d}t\right)\right)^2
 dt\right)^{\frac{1}{2}}\\\nonumber&&-\lambda\int_{0}^{d}sin^2 \left(\frac{\pi}{d}t\right) dt -\int_{0}^{d}sin^2\left(\frac{\pi}{d}t\right)\rho Rdt\\
\leq\nonumber&&(n-1)\frac{\pi^2}{2d}+\frac{K\pi^2\sqrt{2}}{d}-\frac{\lambda}{2}d -\frac{K_1}{2}d.
\end{eqnarray}
Since $\Gamma$ is a minimizing geodesic, hence $\eta(h\xi_i,h\xi_i)\geq 0$. Consequently, 
\begin{equation*}
0\leq(n-1)\frac{\pi^2}{2d}+\frac{K\pi^2\sqrt{2}}{d}-\frac{\lambda}{2}d -\frac{K_1}{2}d.
\end{equation*}
The above relation entails
\begin{equation*}
d\leq \pi\sqrt{2}\sqrt{\frac{(n-1)+K\sqrt{2}}{\lambda+K_1}}.
\end{equation*}
This completes the proof.
\end{proof}
\begin{proof}[\textbf{Proof of Theorem \ref{theoremd}}]
  Set $\omega=\log u$, then $F=|\nabla \omega|^2$ and $\omega$ satisfies the equation:
  \begin{equation*}
  \Delta_f \omega +|\nabla \omega|^2=0.
  \end{equation*}
  Thus, 
  \begin{equation*}
  \Delta_f \omega=-F.
  \end{equation*}
  Therefore,
  \begin{equation}\label{b1}
  |\nabla^2 \omega|^2\geq \frac{1}{n}|\Delta \omega|^2 =\frac{1}{n}|F-\langle \nabla f, \nabla \omega \rangle|^2\geq \frac{1}{n}F^2-\frac{2}{n}\langle \nabla f, \nabla \omega \rangle F,
  \end{equation}
  and
  \begin{equation}\label{b2}
  \langle \nabla \omega, \nabla \Delta_f \omega \rangle=-\langle \nabla \omega, \nabla F \rangle.
  \end{equation}
  Now, the weighted Bochner formula:
  $$\frac{1}{2}\Delta_f |\nabla \omega|^2=|\nabla^2 \omega|^2+\langle \nabla \omega, \nabla \Delta_f \omega \rangle+Ric_f(\nabla \omega,\nabla \omega),$$ together with $(\ref{b1})$, $(\ref{b2})$ and our assumption, yields
  $$\frac{1}{2}\Delta_f F\geq \frac{1}{n}F^2-\frac{2}{n}\langle \nabla f, \nabla \omega \rangle F-\langle \nabla \omega, \nabla F \rangle +(\lambda+\rho R)|\nabla \omega|^2+\frac{1}{m}df\otimes df (\nabla \omega,\nabla \omega).$$
  This implies
  \begin{equation}\label{b3}
  \frac{1}{2}\Delta F-\langle \nabla f, \nabla F\rangle\geq \frac{1}{n}F^2-\frac{2}{n}\langle \nabla f, \nabla \omega \rangle F-\langle \nabla \omega, \nabla F \rangle +(\lambda+\rho R)F+\frac{1}{m}\{df (\nabla \omega)\}^2.
  \end{equation}
  Applying the Cauchy-Schwarz inequality, we obtain
  \begin{equation*}
  \langle \nabla \omega, \nabla f \rangle \leq |\nabla \omega||\nabla f|=|\nabla f| F^{\frac{1}{2}}.
  \end{equation*}
  Using the last inequality in $(\ref{b3})$, we get the estimation.
  \end{proof}
\begin{proof}[\textbf{Proof of Theorem \ref{th3}}]
If $W\in \chi (N)$, then as a consequence of Hodge-de Rham decomposition theorem, (see e.g. \cite{WF1983}), $W$ can be written as
\begin{equation}\label{hd1}
W-\nabla \sigma=Y,
\end{equation}
where $\sigma \in \chi (N)$ is the Hodge-de Rham potential and  $div\ Y=0$. By considering an $(m,\rho)$-quasi Einstein manifold $(N,g)$ so that the equation (\ref{q1}) holds, then we obtain
\begin{equation}\label{qe1}
R+div W=\lambda n+\frac{1}{m}|W|^2+\rho Rn.
\end{equation}
Therefore ($\ref{hd1}$) concludes that $div W=\Delta \sigma$ and hence $(\ref{qe1})$ entails
\begin{equation}\label{qe2}
\lambda n+\frac{1}{m}|W|^2+\rho Rn=R+\Delta \sigma.
\end{equation}
Again $(\ref{e0})$ indicates that
 \begin{equation}\label{qe3}
 \lambda n+\frac{1}{m}|\nabla f|^2+\rho Rn=R+\Delta f.
 \end{equation}
 From $(\ref{qe2})$ and $(\ref{qe3})$, we have 
 \begin{equation*}
 \Delta(f-\sigma)=0,
 \end{equation*}
 which follows the result.
\end{proof}
\begin{proof}[\textbf{Proof of Theorem \ref{th4}}]
We have, for $l>0$ 
\begin{eqnarray}
\nonumber\frac{1}{l}\int_{\mathscr{B}(p,2l)}|W|dV & \leq &  \Big(\int_{\mathscr{B}(p,2l)}\Big(\frac{1}{l} \Big)^2 dV\Big)^{1/2}\Big(\int_{\mathscr{B}(p,2l)}\langle W,W\rangle dV \Big)^{1/2}\\
\nonumber&\leq& \frac{1}{l}\Big(Vol(N)\Big)^{1/2}\|W\|_{\mathscr{B}(p,2l)},
\end{eqnarray}
where $\mathscr{B}(p,l)$ is the open ball of radius $l$ and center at $p$. Thus
\begin{equation*}
\liminf_{\ l\rightarrow \infty} \frac{1}{l}\int_{\mathscr{B}(p,2l)}|W|dV=0.
\end{equation*}
Again, a Lipschitz continuous function $\omega_l$ exists, (see \cite{YA76}) such that for some real constant $K_2>0$,
\begin{eqnarray*}
&& 0\leq \omega_l(x)\leq 1\quad\forall x\in N,\\
&& \text{ supp }\omega_l\subset \mathscr{B}(p,2l),\\
&&|d\omega_l|\leq \frac{K_2}{l}\qquad \text{  almost everywhere on }N \text{ and}\\
&&\omega_l(x)=1\quad\forall x\in \mathscr{B}(p,l).
\end{eqnarray*}
As $\lim\limits_{l\rightarrow\infty}\omega_l=1$, using the function $\omega_l$, we get
\begin{equation*}
\frac{C}{l}\int_{\mathscr{B}(p,2l)}|W|dV\geq\Big|\int_{\mathscr{B}(p,2l)}\omega_l div W dV \Big|.
\end{equation*}
From the defining relation of $(m,\rho)$-quasi Einstein manifold, we obtain
\begin{equation}\label{mr1}
\int_N \{\lambda n+\frac{1}{m} |W|^2+(\rho n-1)R\}dV=0.
\end{equation}
By  virtue of equation (\ref{mr1}) and our assumption, we get the desired results.
\end{proof}
To prove Theorem \ref{th1}, we need the following results:
\begin{lemma}[\cite{WW2008}]\label{a4}
Let $(N,g)$ be a complete Riemannian manifold and $p,q \in N$ with $r=d(p,q)>1$. If $\Gamma$ is a minimal geodesic joining from $p$ to $q$ and parametrized by the arc length s, then
\begin{equation}
2(n-1)+\mathscr{E}_p+\mathscr{E}_q\geq\int_{0}^{r}Ric(\Gamma'(s),\Gamma'(s))ds,  
\end{equation}
where
$\mathscr{E}_q=max\{0, sup\{Ric_y(v,v): y \in B(q,1), ||v||=1\}\},$ for q $\in$ $N$.
\end{lemma}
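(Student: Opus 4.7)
\textbf{Proof proposal for Lemma \ref{a4}.} The plan is to apply the second variation of arc length along the minimal geodesic $\Gamma$ against carefully chosen variation fields, in the spirit of the Myers-type index form argument already used in the proof of Theorem \ref{theoremd1}.

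First I would fix a parallel orthonormal frame $\{\xi_1=\Gamma'(s),\xi_2,\ldots,\xi_n\}$ along $\Gamma$ and construct a continuous, piecewise smooth cutoff $\phi\colon[0,r]\to[0,1]$ that equals $1$ on $[1,r-1]$, vanishes at the endpoints $s=0$ and $s=r$, and is linear on each of $[0,1]$ and $[r-1,r]$. Setting $V_i=\phi\,\xi_i$ for $i=2,\ldots,n$ produces $n-1$ admissible variation fields vanishing at both endpoints, so the minimality of $\Gamma$ forces $\eta(V_i,V_i)\geq 0$ for the index form. Summing these inequalities gives
\begin{equation*}
0\leq\sum_{i=2}^{n}\eta(V_i,V_i)=\int_{0}^{r}\bigl[(n-1)(\phi')^2-\phi^2\,Ric(\Gamma',\Gamma')\bigr]ds,
\end{equation*}
and since $|\phi'|=1$ precisely on $[0,1]\cup[r-1,r]$ this rearranges to
\begin{equation*}
\int_{0}^{r}\phi^2\,Ric(\Gamma',\Gamma')\,ds\leq 2(n-1).
\end{equation*}

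To recover the claimed bound on $\int_0^r Ric(\Gamma',\Gamma')\,ds$, I would split the integral as $\int_0^1+\int_1^{r-1}+\int_{r-1}^r$. On the middle interval $\phi\equiv 1$, so the previous display already controls this piece by $2(n-1)$. On $[0,1]$ the geodesic lies inside $B(p,1)$ and has unit tangent, so the definition of $\mathscr{E}_p$ yields the pointwise bound $Ric(\Gamma'(s),\Gamma'(s))\leq\mathscr{E}_p$; likewise $Ric(\Gamma'(s),\Gamma'(s))\leq\mathscr{E}_q$ on $[r-1,r]$. Integrating these pointwise bounds over intervals of length one and adding the three contributions produces exactly the stated inequality.

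The only real delicacy is matching the $L^2$-norm of $\phi'$ to the target constant $2(n-1)$; the piecewise linear cutoff above does this optimally, which is why the endpoint intervals have length exactly one, matching the radius used in the definition of $\mathscr{E}_p$ and $\mathscr{E}_q$. The $\max\{0,\cdot\}$ built into those quantities is there precisely to keep the endpoint estimates valid when the local Ricci curvature is negative, so no sign issue arises. Beyond these bookkeeping points, the argument is a routine application of the index lemma, so I do not anticipate any substantial obstacle.
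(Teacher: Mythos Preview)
The paper does not prove this lemma at all; it is quoted verbatim from Wylie \cite{WW2008}, so there is no ``paper's own proof'' to compare against. Your outline is in fact the standard argument Wylie gives, so the approach is the right one.

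That said, your combination step has a small but genuine gap. From
\[
\int_0^r \phi^2\,Ric(\Gamma',\Gamma')\,ds \le 2(n-1)
\]
you cannot conclude that the middle piece $\int_1^{r-1} Ric(\Gamma',\Gamma')\,ds$ is itself bounded by $2(n-1)$: the weighted contributions $\int_0^1 \phi^2 Ric$ and $\int_{r-1}^r \phi^2 Ric$ may be negative, which would push the middle piece above $2(n-1)$. The correct bookkeeping is to write
\[
\int_0^r Ric(\Gamma',\Gamma')\,ds = \int_0^r \phi^2\,Ric\,ds + \int_0^r (1-\phi^2)\,Ric\,ds,
\]
observe that $1-\phi^2$ is nonnegative and supported on $[0,1]\cup[r-1,r]$, and then apply your pointwise bounds $Ric\le\mathscr{E}_p$ and $Ric\le\mathscr{E}_q$ there (the $\max\{0,\cdot\}$ in the definition is exactly what makes this step legitimate). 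Since $\int_0^1(1-\phi^2)\,ds\le 1$, you recover the stated inequality. You should also note that your tripartite splitting tacitly assumes $r>2$; for $1<r\le 2$ the two unit balls already cover $\Gamma$ and the inequality follows directly from the pointwise bounds without any index-form input.
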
 
\begin{lemma}\label{l1}
If $(N,g)$ is a complete Riemannian manifold satisfying (\ref{a7}), then for any p,q $\in N$,
\begin{equation}\label{a5}
 max\{1,\frac{1}{K_3+\lambda}\big(2(n-1)+\mathscr{E}_p+\mathscr{E}_q+||W_p||+||W_q|| \big)\}\geq d(p,q).
\end{equation} 
\end{lemma}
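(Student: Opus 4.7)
The plan is to run a vector-field Myers-type argument: use the tensor inequality $(\ref{a7})$ to convert the hypothesis on $Ric$-plus-$\mathcal{L}_W g$ into a pointwise lower bound for $Ric$ along a minimizing geodesic $\Gamma$ between $p$ and $q$, and then compare that bound to the integral upper bound supplied by Lemma $\ref{a4}$.

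Concretely, if $d(p,q)\leq 1$ there is nothing to prove because the left-hand max in $(\ref{a5})$ is already at least $1$. Otherwise, set $r=d(p,q)>1$ and let $\Gamma\colon[0,r]\to N$ be unit-speed and minimizing. Evaluating $(\ref{a7})$ on $(\Gamma',\Gamma')$, using $\tfrac12(\mathcal L_W g)(\Gamma',\Gamma')=g(\nabla_{\Gamma'}W,\Gamma')$, and discarding the nonnegative term $\tfrac1m (W^\flat(\Gamma'))^2$ gives the pointwise lower bound
$$Ric(\Gamma',\Gamma')\ \geq\ \lambda+\rho R-g(\nabla_{\Gamma'}W,\Gamma').$$
Integrating over $[0,r]$ collapses the $\mathcal L_W g$ contribution, via the fundamental theorem of calculus, to the boundary expression $\langle W_q,\Gamma'(r)\rangle-\langle W_p,\Gamma'(0)\rangle$, which Cauchy--Schwarz bounds in absolute value by $\|W_p\|+\|W_q\|$ (using $\|\Gamma'\|=1$); the $\rho R$ contribution contributes $K_3 r$ from the integral hypothesis. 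Feeding in Lemma $\ref{a4}$ as the upper bound on $\int_0^r Ric(\Gamma',\Gamma')\,ds$ yields
$$(\lambda+K_3)\,r\ \leq\ 2(n-1)+\mathscr{E}_p+\mathscr{E}_q+\|W_p\|+\|W_q\|,$$
from which the claimed inequality follows by dividing through by $\lambda+K_3$ and combining with the trivial case $r\leq 1$.

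The only real subtlety is the reading of the hypothesis $\int_0^r\rho R=K_3$: for the $(\lambda+K_3)^{-1}$ factor in the conclusion to come out correctly, the integral must be interpreted so that $\rho R$ contributes at the rate $K_3$ per unit arclength along $\Gamma$ (i.e.\ its contribution to the integrated lower bound is $K_3 r$), and one tacitly needs $\lambda+K_3>0$ for the final division to preserve the inequality. Modulo this bookkeeping, every remaining step is one of Cauchy--Schwarz, the fundamental theorem of calculus, or an invocation of Lemma $\ref{a4}$, so I do not anticipate a significant obstacle beyond pinning down the interpretation of the hypothesis.
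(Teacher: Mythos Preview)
Your argument is correct and follows essentially the same route as the paper: evaluate \eqref{a7} on $(\Gamma',\Gamma')$, use $\tfrac12(\mathcal L_W g)(\Gamma',\Gamma')=\tfrac{d}{ds}g(W,\Gamma')$ so the Lie-derivative term integrates to boundary values controlled by Cauchy--Schwarz, drop the nonnegative $\tfrac1m(W^\flat(\Gamma'))^2$ term, and combine with Lemma~\ref{a4}. Your observation about the reading of $\int_0^r\rho R=K_3$ (as contributing $K_3\,d(p,q)$) and the tacit hypothesis $\lambda+K_3>0$ matches exactly what the paper does in its computation without stating it explicitly.
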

\begin{proof}
Suppose that $d(p,q)>1$ and $\Gamma$ is a minimal geodesic joining from p to q and parametrized by arc length s. Then along $\Gamma$, we can easily get
\begin{eqnarray*}
\mathcal{L}_W g(\Gamma', \Gamma') = 2 \frac{d}{ds}[g(W,\Gamma')].
\end{eqnarray*}
Applying Lemma \ref{a4}, we obtain
\begin{equation}\label{a3}
2(n-1)+\mathscr{E}_p+\mathscr{E}_q \geq\int_{0}^{r}Ric(\Gamma'(s),\Gamma'(s))ds.  
\end{equation}
 Using the Cauchy-Schwarz inequality and (\ref{a7}), we have
\begin{eqnarray*}
\int_{0}^{r}Ric(\Gamma'(s), \Gamma'(s))ds &\geq & g_p(W,\Gamma'(0))-g_q(W, \Gamma'(r))+\int_{0}^{r}(W^{\flat}(\Gamma'(s)))^2 ds+\lambda d(p,q)+ \int_{0}^{r} \rho R  ds\\
&\geq& \lambda d(p,q)-||W_p||-||W_q|| + K_3 d(p,q).
\end{eqnarray*}
In view of (\ref{a3}) and solving for $d(p,q)$, the last inequality  entails (\ref{a5}).
\end{proof}
\begin{proof}[\textbf{Proof of Theorem \ref{th1}}]
We omit the proof of Theorem \ref{th1}, as it is straightforward from the Lemma \ref{l1} and following the technique of \cite[Theorem 1.1]{WW2008}.
\end{proof}
\begin{proof}[\textbf{Proof of Theorem \ref{lemma1}}]
Let $p \in N$ and $\Gamma : [0,\infty ) \rightarrow N $ be any geodesic starting from $p$ and parametrized by the arc length $s$.
Hence (\ref{a7}) and the Cauchy-Schwarz inequality together imply that
\begin{eqnarray*}
\int_{0}^{r} Ric(\Gamma'(s), \Gamma'(s)) ds &\geq& \int_{0}^{r}(W^{\flat}(\Gamma'(s)))^2+\lambda r + \int_{0}^{r} \rho R g + 2g(W_p,\Gamma'(0))-2g(W_{\Gamma(r)}, \Gamma'(r)) ds\\
&\geq& 2g(W_p, \Gamma'(0))-2\|W_{\Gamma(r)}\|+\lambda r + K_3r.
\end{eqnarray*}
The boundedness of $\|W\|$ provides
\begin{equation*}
\int_{0}^{+\infty} Ric(\Gamma'(s),\Gamma'(s))= + \infty,
\end{equation*} 
and hence Ambrose's compactness criteria \cite {WA1957} concludes that $N$ is compact.
\end{proof}
\section*{Acknowledgment}
 The second author gratefully acknowledges to the CSIR(File No.:09/025(0282)/2019-EMR-I), Govt. of India for the award of JRF. Also the third author conveys sincere thanks to the
 Netaji Subhas Open University for partial financial assistance (Project No.: AC/140/2021-22).


\begin{thebibliography}{11}


\bibitem{WA1957}
Ambrose, W., \textit{A theorem of Myers}, Duke Math. J.,  \textbf{24} (1957), 345--348.

\bibitem{ABR2011}Aquino, C., Barros, A. and Ribeiro, E. Jr., \textit{Some applications of the Hodge-de Rham decomposition to Ricci solitons}, Results. Math., \textbf{60} (2011), 245--254.


 
\bibitem{BG16}         
Barros, A. A. and Gomes, J. N., \textit{Triviality of compact m-quasi-Einstein manifolds}, Results Math., \textbf{71(1-2)} (2016), 241--250.
                             

          
   \bibitem{CSW11} 
 Case, J., Shu, Y. J. and Wei, G., \textit{Rigidity of quasi-Einstein metrics}, Diff. Geom. Appl., \textbf{29(1)} (2011), 93--100.  
 
    \bibitem{CAT12} 
Catino, G., \textit{Generalized quasi Einstein manifolds with harmonic Weyl tensor}, Math. Z., \textbf{271} (2012), 751-–756.
   
          
              \bibitem{CK04}
             Chow, B. and Knopf, D.,
              \textit{The Ricci flow: an introduction}, 
               Amer. Math. Soc., 2004.
  \bibitem{DG17}Demirba\'g, S. A. and G\'uler, S., \textit{Rigidity of $(m,\rho)$-quasi Einstein manifolds}, Math. Nach., \textbf{290} (2017), 2100--2110.
  \bibitem{DA20} Deshmukh, S. and Al-Sodais, H., {\it A note on almost Ricci solitons}, Anal. Math. Phys., \textbf{10(4)} (2020), 75--86.             

                                
                            


               \bibitem{HA82}         
      Hamilton, R. S., \textit{Three-manifolds with positive Ricci curvature}, J. Diff. Geom., \textbf{17} (1982), 255--306.

\bibitem{HW13}             
Huang, G. and Wei, Y., \textit{The classification of $(m, \rho)$-quasi-Einstein manifolds}, Ann. Glob. Anal. Geom., \textbf{44} (2013), 269--282.

\bibitem{Limoncu2012}         
Limoncu, M., \textit{The Bakry-\'Emery Ricci tensor and its applications to some compactness theorems}, Math. Z., \textbf{271} (2012), 715--722.
\bibitem{CC2010}         
Li, C. and Wenyi, C., \textit{Gradient estimates for positive smooth $f$-harmonic functions}, Acta Math. Scientia, \textbf{30(5)} (2010), 1614--1628.

\bibitem{LR2008}         
L\'opez, M. F. and Rio, E. G., \textit{A remarks on compact Ricci solitons}, Math. Ann., \textbf{340} (2008), 893--896.
          \bibitem{CA20}
  Mondal, C. K. and Shaikh, A. A., \textit{ On Ricci solitons whose potential is convex}, Proc. Indian Acad. Sci. (Math. Sci.), \textbf{130(55)} (2020), 1--7.
  \bibitem{MO01}  Morita, S., \textit{Geometry of differential forms}, Amer. Math. Soc., \textbf{2001}.
 \bibitem{RP21}         
 Rovenski, V. and Patra, D. S., \textit{On non-gradient $(m,\rho)$-quasi-Einstein
 contact metric manifolds}, J. Geom., \textbf{112(14)} (2021), 1--17.        
         
           
         
    
          
    \bibitem{AAP2021} Shaikh, A. A., Cunha, A. W. and Mandal, P., {\em Some characterizations of $\rho$-Einstein solitons }, J. Geom. Phys., \textbf{166} (2021), 104270.
      \bibitem{SMM20} Shaikh, A. A.,  Mondal, C. K. and Mandal, P., {\em Compact gradient $\rho$-Einstein soliton is isometric to the Euclidean sphere}, Indian J. Pure Appl. Math., \textbf{52} (2021), 335--339.
   \bibitem{SMM2022} Shaikh, A. A., Mandal, P. and  Mondal, C. K., {\em Diameter estimation of gradient $\rho$-Einstein soliton}, J. Geom. Phys., \textbf{177} (2022), 104518.
 
      
  
  \bibitem{SD22} Sharma, R. and  Deshmukh, S., {\em Ricci almost solitons with associated projective vector field }, Adv. Geom., \textbf{22(1)} (2022), 1--8. 
    \bibitem{WF1983}Warner, F., \textit{Foundations of differentiable manifolds and Lie groups}, Springer- Verlag, New York, 1983.
\bibitem{WW2008}
     Wylie, W., \textit{Complete shrinking Ricci solitons have finite fundamental group}, Proc. Amer. Math. Soc., \textbf{136} (2008), 1803--1806.
   \bibitem{YA76}Yau, S. T., \textit{Some function-theoretic properties of complete Riemannian manifold and their applications to geometry}, Indiana Univ. Math. J., \textbf{25(7)} (1976), 659--670.
 
  
 

                                

\end{thebibliography}
\end{document}